\newtheorem{prop}{Proposition}
\newtheorem{lemma}[prop]{Lemma}
\newtheorem{thm}[prop]{Theorem}
\newtheorem{theorem}{Theorem}
\newtheorem{assumption}{Assumption}
\theoremstyle{definition}
\newtheorem{example}{Example}
\newtheorem{defn}[prop]{Definition}
\newcommand{\CC}{\mathbb{C}}
\renewcommand{\P}{\mathbb{P}}
\newcommand{\D}{\mathcal{D}}
\newcommand{\I}{\mathcal{I}}
\renewcommand{\O}{\mathcal{O}}
\newcommand{\Z}{\mathbb{Z}}
\newcommand{\Q}{\mathbb{Q}}
\newcommand{\F}{\mathcal{F}}
\newcommand{\G}{\mathcal{G}}
\newcommand{\Ext}{\mathcal{E}xt}
\newcommand{\M}{\mathcal{M}}
\newcommand{\N}{\mathcal{N}}
\newcommand{\hilb}{\operatorname{Hilb}}
\newcommand{\Ex}{\operatorname{Ext}}
\newcommand{\Home}{\mathcal{H}om}
\newcommand{\Hom}{\operatorname{Hom}}
\newcommand{\pic}{\operatorname{Pic}}
\newcommand{\ch}{\operatorname{ch}}
\newcommand{\ob}{\operatorname{Ob}}
\renewcommand{\tan}{\operatorname{Tan}}
\newcommand{\ext}{\operatorname{Ext}}
\newcommand{\et}{{\text{\'et}}}
\newcommand{\DT}{\operatorname{DT}}
\title[DT invariants and linear systems]{Donaldson-Thomas invariants, linear systems and punctual Hilbert schemes}
\author{Amin Gholampour and Artan Sheshmani}
\date{\today}                                           % Activate to display a given date or no date
\begin{document}
\maketitle

\begin{abstract}
We study certain DT invariants arising from stable coherent sheaves in a nonsingular projective threefold supported on the members of a linear system of a fixed line bundle. When the canonical bundle of the threefold satisfies certain positivity conditions, we relate the DT invariants to Carlsson-Okounkov formulas for the ``twisted Euler's number" of the punctual Hilbert schemes of nonsingular surfaces, and conclude they have a modular property.
\end{abstract}
\section{Introduction}

S-duality predicts that certain generating functions of DT invariants of semistable 2-dimensional sheaves inside a Calabi-Yau threefolds are modular  (see for example \cite{a100, DM, GST}). In \cite{GS} we studied these DT invariants for K3 fibration over curves (which are not necessarily Calabi-Yau) and still got a modular answer.  In this paper, we study a certain type of these DT invariants in some other special cases of non-Calabi-Yau geometries and show that they have modular properties. To do this, we express them in terms of integrals over the Hilbert scheme of points on nonsingular surfaces.  Carlsson-Okounkov \cite{CO} found an explicit formula for the generating series of the integrals that arise this way extending G\"otsche's formula for the generating series of Euler's numbers of the Hilbert schemes. We give several examples of threefolds for which our required conditions are all satisfied.

\section{Statement of the result}
Let $(X,\O(1))$ be a  nonsingular polarized threefold with $$H^1(\O_X)=0=H^2(\O_X)$$  and $L$ be a fixed line bundle on $X$ generated by its global sections and satisfying the following further conditions:

\begin{assumption} \label{asum0} Assume that $H^0(L\otimes K_X)=0=H^1(L \otimes K_X)$ and $$ -K_X\cdot L^2>L^3, \qquad -K_X \cdot L\cdot \O(1)>0.$$ % and the linear system $|L|$ is nonempty and base-point free. Moreover, if  $S\in |L|$ is a general member (which is a nonsingular surface by Bertini's theorem) then $H^{1}(\O_S)=0=H^2(\O_S)$.
 \end{assumption} 
 
We think of  the condition $-K_X\cdot L^2>L^3$ as saying that $-K_X$ is sufficiently positive with respect to $L$.  The condition $-K_X \cdot L\cdot \O(1)>0$ is immediate for example if $-K_X$ and $L$ are ample. In the following lemma we summarize some of  implications of Assumption \ref{asum0}: 
 
 \begin{lemma} \label{vanish} Let $S\in |L|$ be a general member. Then $S$ is a nonsingular surface and  \begin{enumerate} \item $H^{1}(\O_S)=0=H^2(\O_S)$, \item $2\le \dim |L|=h^0(L|_S)$, \item  $H^1(L|_S)=0=H^{2}(L|_S)$. \end{enumerate}
 \end{lemma}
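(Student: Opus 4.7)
The plan is to handle the four claims in the order listed, since each later item uses the previous ones. Smoothness of $S$ is immediate from Bertini, as $L$ being generated by global sections makes $|L|$ basepoint-free. For Part (1), the ideal sheaf sequence $0 \to L^{-1} \to \O_X \to \O_S \to 0$ combined with the vanishings $H^1(\O_X) = H^2(\O_X) = 0$ reduces the problem to showing $H^2(L^{-1}) = H^3(L^{-1}) = 0$; these follow by Serre duality on $X$ from the assumptions $H^0(L \otimes K_X) = H^1(L \otimes K_X) = 0$.

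For Part (3), I would split the two vanishings. For $H^2(L|_S)$: adjunction $K_S = (K_X + L)|_S$ and Serre duality on $S$ identify $H^2(L|_S) \cong H^0(K_X|_S)^*$. Since $L$ is nef (being globally generated, so $L^3 \ge 0$), the hypothesis $-K_X \cdot L^2 > L^3$ gives $K_X|_S \cdot L|_S = K_X \cdot L^2 < 0$; a nonzero section of $K_X|_S$ would produce an effective divisor with nonnegative intersection against the nef class $L|_S$, a contradiction. For $H^1(L|_S)$: I would pass to a smooth curve $C \in |L|_S|$, whose existence follows from $\dim |L|_S| \ge 1$ (justified via Riemann--Roch combined with the just-established $H^2(L|_S) = 0$, as in Part (2) below). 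The sequence $0 \to \O_S \to L|_S \to L|_C \to 0$ together with Part (1) yields $H^1(L|_S) \cong H^1(L|_C)$, and adjunction gives $K_C = (K_X + 2L)|_C$, so
\[
\deg(K_C - L|_C) = K_X \cdot L^2 + L^3 < 0
\]
by the same positivity hypothesis; on a smooth irreducible $C$ this forces $H^0(K_C - L|_C) = 0$, and Serre duality on $C$ concludes.

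Part (2) then follows by tensoring the ideal sequence with $L$ to obtain $0 \to \O_X \to L \to L|_S \to 0$; the vanishing $H^1(\O_X) = 0$ gives $h^0(L|_S) = h^0(L) - 1 = \dim |L|$, while Riemann--Roch on $S$ reads
\[
\chi(L|_S) = 1 + \tfrac{1}{2}\bigl(L|_S^2 - K_S \cdot L|_S\bigr) = 1 - \tfrac{1}{2} K_X \cdot L^2,
\]
which exceeds $1$ by the hypothesis, and combined with $H^1(L|_S) = H^2(L|_S) = 0$ delivers $h^0(L|_S) \ge 2$. The main subtlety I anticipate is guaranteeing irreducibility of the general $C \in |L|_S|$ used in the argument for $H^1(L|_S)$, since the negative-degree argument on each component would require $K_S$ to meet that component negatively. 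When $L$ is big ($L^3 > 0$), $L|_S$ has a $2$-dimensional image under its morphism to projective space and general members of $|L|_S|$ are irreducible by Bertini; the borderline case $L^3 = 0$ would need separate treatment, perhaps via Stein factorization of the morphism defined by $L|_S$, or by bypassing the curve reduction and arguing $H^1(L) = 0$ directly on $X$.
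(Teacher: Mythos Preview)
Your approach is essentially the same as the paper's, with two minor variations. First, the paper proves $\dim|L|\ge 2$ geometrically: a basepoint-free pencil would force two general members to be disjoint, hence $L^2=0$ numerically, contradicting $-K_X\cdot L^2>L^3\ge 0$; you instead use Riemann--Roch on $S$. Second, the paper obtains $H^2(L|_S)=0$ from the same curve exact sequence $0\to\O_S\to L|_S\to L|_C\to 0$ (using $H^2(\O_S)=0$ and $H^2(L|_C)=0$ for dimension reasons), whereas you argue via Serre duality and nefness of $L|_S$. For $H^1(L|_S)=0$ both proofs reduce to a general $C\in|L|_S|$ and use the identical negative-degree computation $\deg(K_X+L)|_C=K_X\cdot L^2+L^3<0$.

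The irreducibility subtlety you flag for $C$ is genuine and the paper does not address it either: the paper simply invokes Bertini for smoothness of $C$ and then applies the degree argument as if $C$ were irreducible. Your proposed fix (when $L^3>0$, the morphism defined by $|L|_S|$ has $2$-dimensional image, so Bertini gives irreducible general members) is correct, and you are right that the borderline case $L^3=0$ would require a separate argument. So on this point you are being more careful than the paper.
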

\begin{proof} Since $|L|$ is base-point free by Bertini's theorem $S$ is nonsingular. The natural short exact sequence $0\to L^*\to \O_X\to \O_S\to 0$ gives the exact sequence for $i=1, 2$ $$H^i(\O_X)\to H^i(\O_S)\to H^{i+1}(L^*)\cong H^{2-i}(L\otimes K_X)^*.$$ By our assumptions the first and the last terms vanish and hence so does the middle term. 

Again since $|L|$ is base-point free $\dim |L|\neq 0$ and if $\dim |L|=1$ any two distinct members of the linear system do not intersect and so $L^2=0$,  which contradicts the first inequality in  Assumption \ref{asum0}. Therefore, we must have $\dim |L| \ge 2$.
Since $H^1(\O_X)=0$, the natural short exact sequence $0\to \O_X\to L\to L|_S\to 0$ gives the exact sequence $$0\to H^0(\O_X)\to H^0(L)\to H^0(L|_S)\to0, $$ which proves that $\dim |L|=h^0(L)-1=h^0(L|_S)$.

Next, consider the linear system  $|L|_S|$ on $S$. It has to be base-point free because $|L|$ is.  Let $C\in |L|_S|$ be a general member, which must be smooth by Bertini's theorem. By Serre duality and adjunction formula $$H^1(L|_C)\cong H^0(L^*\otimes K_C)^*\cong H^0(K_S|_C)^*\cong H^0(L\otimes K_X |_C)=0,$$ where the last vanishing is because $\deg(L\otimes K_X |_C)=K_X\cdot L^2+L^3<0$.  Also, $H^2(L|_C)=0$ for dimension reason. So applying cohomology to the natural short exact sequence $0\to \O_S\to L|_S \to L|_C\to 0 $ and using the assumption  $H^{i\ge 1}(\O_S)=0$ we see that $H^{i\ge 1}(L|_S)=0$ as claimed.
\end{proof}

We consider the moduli space of coherent sheaves in $X$, which are supported on the members of $|L|$. For this fix a Chern character vector \begin{equation} \label{ch} \ch=\big(\ch_0=0,\; \ch_1=L,\; \ch_2=\gamma,\; \ch_3=\xi\big )\in \oplus_{i=0}^3 H^{2i}(X,\Q).\end{equation} We denote the moduli space of Gieseker semistable sheaves (with respect to $\O(1)$) with Chern character $\ch$ by $\M(X,\ch)$. It is a projective scheme. The Hilbert polynomial of coherent sheaves with Chern character $\ch$ is of degree 2, and the coefficients of degree 2 and degree 1 terms are respectively  given by 
$$a_2=L\cdot \O(1)^2/2,\qquad a_1=\gamma \cdot \O(1)-L\cdot K_X\cdot \O(1)/2.$$

We make the following assumption on $\ch_1, \ch_2$ to ensure semistability implies stability (for any choice of $\ch_3$):

\begin{assumption}\label{asum1} %We assume that either $c_1(L)$ is an irreducible class or $\gcd(\gamma \cdot \O(1), L\cdot \O(1)^2)=1$.
%\begin{enumerate}
%item Given a $\CC$-point $\F$ of $\M$ the Gieseker semistability of $\F$ implies its stability.
%\item $\text{vir-dim}(\M)=\text{dim}(|H|)$
%\item $S$ is an ample divisor for all $S\in |H|$.
%\end{enumerate}
Assume that $L$ and $\gamma$ are such that for any decomposition $L=L_1+L_2$ in which $L_1$ and $L_2$ are the class of nonzero effective divisors, and any $m\in \mathbb Z$ 
$$\frac{2m+(L_1-K_X)\cdot L_1\cdot \O(1)}{L_1\cdot \O(1)^2}\neq \frac{a_1}{a_2}.$$

For example, when $L$ is an irreducible class the condition above is immediate for any $\gamma$.
\end{assumption}

\begin{lemma} \label{noss}
 $\M(X,\ch)$ contains no strictly semistable sheaves.
\end{lemma}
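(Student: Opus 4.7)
The plan is to argue by contradiction. Suppose $F\in\M(X,\ch)$ is strictly semistable, so there is a nonzero proper subsheaf $F'\subsetneq F$ with the same reduced Hilbert polynomial $p_{F'}=p_F$. By definition $F$ is pure of dimension $2$ (its leading coefficient $a_2=L\cdot \O(1)^2/2$ is positive because $L$ is a nonzero effective class and $\O(1)$ is ample), so every nonzero subsheaf of $F$, in particular $F'$, is pure of dimension $2$. Writing $P_{F'}=c\cdot P_F$ for some rational $0<c<1$ (strict, else $F/F'=0$), the quotient $F/F'$ has nonzero $2$-dimensional part. Consequently both $L_1:=\ch_1(F')$ and $L_2:=L-L_1=\ch_1(F/F')$ are classes of nonzero effective divisors on $X$, giving an admissible decomposition $L=L_1+L_2$ in the sense of Assumption \ref{asum1}.

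Next I compute the ratio $a_1'/a_2'$ for $F'$. Hirzebruch-Riemann-Roch on $X$ (precisely as in the derivation of $a_1,a_2$ preceding Assumption \ref{asum1}) gives
\[ a_2' \;=\; \frac{L_1\cdot \O(1)^2}{2}, \qquad a_1' \;=\; \ch_2(F')\cdot\O(1) \;-\; \frac{L_1\cdot K_X\cdot \O(1)}{2}. \]
The essential point is that the integral Chern classes $c_i(F')\in H^{2i}(X,\Z)$ of any coherent sheaf on the smooth threefold $X$ are well defined via a finite locally free resolution, so in particular $c_2(F')\cdot \O(1)\in\Z$. Combined with the standard identity $\ch_2=c_1^2/2-c_2$ and $c_1(F')=L_1$, this yields
\[ 2\,\ch_2(F')\cdot\O(1) \;=\; L_1^2\cdot \O(1) \;+\; 2m, \qquad m:=-c_2(F')\cdot\O(1)\in\Z. \]
Substituting gives
\[ \frac{a_1'}{a_2'} \;=\; \frac{2m + (L_1-K_X)\cdot L_1 \cdot \O(1)}{L_1\cdot \O(1)^2}. \]

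Finally, the equality $p_{F'}=p_F$ forces $a_1'/a_2'=a_1/a_2$, which is ruled out by Assumption \ref{asum1} applied to the decomposition $L=L_1+L_2$ with this specific integer $m$, a contradiction. The only mildly delicate point I foresee is justifying integrality of $c_2(F')\cdot \O(1)$ for a possibly non-locally-free subsheaf; once this is in place, the matching of $a_1'/a_2'$ with the shape of ratios forbidden by Assumption \ref{asum1} is automatic, and the purity of the quotient $F/F'$ (ensuring $L_2$ is effective and nonzero) follows formally from $p_{F'}=p_F$ together with $F'\subsetneq F$.
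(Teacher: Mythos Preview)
Your argument is correct and follows essentially the same route as the paper's proof: assume strict semistability, extract a destabilising piece whose $\ch_1$ gives a nontrivial effective decomposition $L=L_1+L_2$, rewrite the equality of reduced Hilbert polynomials in the shape forbidden by Assumption~\ref{asum1} using $\ch_2=c_1^2/2-c_2$ and integrality of $c_2\cdot\O(1)$. The only cosmetic difference is that you work with a subsheaf $F'$ while the paper phrases it in terms of a quotient $G$; since the assumption quantifies over all $m\in\Z$, your choice of sign $m=-c_2(F')\cdot\O(1)$ versus the paper's $m=c_2(G)\cdot\O(1)$ is immaterial.
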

\begin{proof} Suppose $F\in \M(X,\ch)$ is strictly semistable. Then there exists a quotient $G$ of $F$ such that $\ch_1(G)=L_1$ and $\ch_2(G)=\gamma_1$, and $L-L_1$ is nonzero and effective such that $$\frac{\gamma_1\cdot \O(1)-L_1\cdot K_X\cdot \O(1)/2}{L_1\cdot \O(1)^2/2}=\frac{a_1}{a_2}.$$ Here, the left hand side is the ratio of coefficients of degree 1 and 2 terms of Hilbert polynomial of $G$.
But $\gamma_1=c_2(G)+L_1^2/2$ and $m:=c_2(G)\cdot \O(1) \in \mathbb Z$, so we get $$\frac{2m+(L_1-K_X)\cdot L_1\cdot \O(1)}{L_1\cdot \O(1)^2}=\frac{a_1}{a_2}$$ contradicting the condition in Assumption  \ref{asum1}.
\end{proof}

By Serre duality $$\Ex^3(F,F)\cong \Hom(F,F\otimes K_X)^*=0$$ for any closed point $F \in \M(X,\ch)$. This is because by the inequality $K_X \cdot L\cdot \O(1)<0$ the Hilbert polynomial of $F$ is greater than that of $F\otimes K_X$ and so by the stability of $F$ there is no nontrivial homomorphism from $F$ to $F\otimes K_X$. Therefore,

\begin{thm}(Thomas \cite[Theorem 3.30]{T}). Under Assumption \ref{asum1}, $\M(X,\ch)$ carries a perfect obstruction theory and hence a virtual class $$[\M(X,\ch)]^{vir}\in A_{v}(\M(X,\ch)), \qquad v:= -K_X\cdot L^2/2+1.$$

\end{thm}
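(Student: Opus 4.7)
The plan is to recognize this as a direct specialization of Thomas's general construction of perfect obstruction theories on moduli of stable sheaves on a smooth projective threefold, so that the work reduces to checking the hypotheses of \cite[Theorem 3.30]{T} and then extracting the stated virtual dimension by Hirzebruch--Riemann--Roch.

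First I would record that every closed point $F\in\M(X,\ch)$ is Gieseker stable, which is exactly Lemma \ref{noss}; in particular $\Hom(F,F)=\CC$. Second, I would invoke the vanishing $\Ex^{3}(F,F)=0$ already established in the paragraph preceding the theorem, where Serre duality together with the strict inequality $-K_X\cdot L\cdot\O(1)>0$ from Assumption \ref{asum0} forces $\Hom(F,F\otimes K_X)=0$ via a Hilbert-polynomial comparison. With these two inputs, Thomas's theorem applies and produces a symmetric two-term perfect obstruction theory on $\M(X,\ch)$ with tangent $\Ex^{1}(F,F)$ and obstruction $\Ex^{2}(F,F)$, and hence a virtual fundamental class.

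It then remains to verify that the virtual dimension equals $v=-K_X\cdot L^2/2+1$. Combining $\hom(F,F)=1$ with $\ex^{3}(F,F)=0$ gives $\ex^{1}(F,F)-\ex^{2}(F,F)=1-\chi(F,F)$, so the task reduces to evaluating $\chi(F,F)=\int_X \ch(F^\vee)\cdot\ch(F)\cdot\mathrm{td}(X)$. Because $\ch_0=0$, nearly every cross-term of $\ch(F^\vee)\cdot\ch(F)$ vanishes, and the only surviving piece in cohomological degree four is $-L^{2}$; pairing this with $\mathrm{td}_1(X)=-K_X/2$ and integrating over $X$ yields $\chi(F,F)=K_X\cdot L^{2}/2$, from which the stated value of $v$ falls out.

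I do not expect any serious obstacle. The theorem is effectively a citation plus a dimension count: both of the geometric inputs required by Thomas's construction (absence of strictly semistable sheaves, and the top $\Ex$ vanishing) are already in hand, and the Riemann--Roch extraction is collapsed by $\ch_0=0$ into a one-line calculation---which also explains why the virtual dimension is insensitive to the choice of $\gamma$ and $\xi$.
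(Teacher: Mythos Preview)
Your proposal is correct and, if anything, more explicit than the paper's own treatment: the paper simply writes $\qed$, relying on the paragraph before the theorem for the $\Ex^3$-vanishing and on Lemma~\ref{noss} for stability, exactly as you do. The one minor difference is how the value of $v$ is justified. You extract $v=-K_X\cdot L^2/2+1$ directly from Hirzebruch--Riemann--Roch using $\ch_0=0$, whereas the paper defers this: the formula is stated without derivation in the theorem, and the subsequent Lemma~\ref{virdim} instead computes $v=1-\chi(\O_S,\O_S)$ at the specific point $F=\O_S$ via the exact sequence $0\to\O_X\to L\to L|_S\to 0$ and the cohomology vanishings of Lemma~\ref{vanish}. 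Your HRR route is cleaner and makes transparent why $v$ depends only on $L$; the paper's route has the side benefit of simultaneously identifying $v$ with $\dim|L|$, which is needed later.
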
 \qed

An interesting feature for us is that the virtual dimension $v$ only depends on $\ch_1=L$ (and not on $\ch_2, \ch_3$) and is equal to the dimension of the linear system $|L|$:

%\begin{assumption} \label{asum2} We assume $\dim |L|= v= -K_X\cdot L^2/2+1$.
%\end{assumption}

\begin{lemma} \label{virdim} $\dim |L|= v= -K_X\cdot L^2/2+1$.
\end{lemma}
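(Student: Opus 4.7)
The plan is to identify $\dim|L|$ with $h^0(L|_S)$ for a general $S\in|L|$, then compute this via Riemann--Roch on the surface $S$, using the cohomology vanishings supplied by Lemma \ref{vanish} to turn $h^0$ into an Euler characteristic and using adjunction to rewrite $K_S$ in terms of $K_X$ and $L$.

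More precisely, first I would invoke Lemma \ref{vanish}(2) to replace $\dim|L|$ by $h^0(L|_S)$. Next, since Lemma \ref{vanish}(3) gives $H^1(L|_S)=H^2(L|_S)=0$, this reduces to $\chi(L|_S)$. By adjunction on the smooth surface $S\subset X$ cut out by $L$, we have $K_S=(K_X+L)|_S$, so $L|_S-K_S=-K_X|_S$ as divisor classes on $S$. Riemann--Roch on the surface then reads
\[
\chi(L|_S)=\chi(\O_S)+\tfrac12\, L|_S\cdot(L|_S-K_S)=\chi(\O_S)-\tfrac12\, L|_S\cdot K_X|_S.
\]

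Finally, Lemma \ref{vanish}(1) gives $H^1(\O_S)=H^2(\O_S)=0$ and hence $\chi(\O_S)=1$. The intersection number on $S$ can be pushed forward to $X$: $L|_S\cdot K_X|_S=K_X\cdot L^2$ (since $S$ itself represents the class $L$). Combining these,
\[
\dim|L|=h^0(L|_S)=\chi(L|_S)=1-\tfrac12\,K_X\cdot L^2=v,
\]
which is the desired identity.

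There is no genuine obstacle here; the argument is a one-line Riemann--Roch computation once the correct vanishings and the adjunction formula are in place. The only thing to be careful about is ensuring all inputs are already available: the smoothness of $S$ and $\chi(\O_S)=1$ come from Lemma \ref{vanish}(1), the identification $\dim|L|=h^0(L|_S)$ from Lemma \ref{vanish}(2), and the vanishing $H^{\ge 1}(L|_S)=0$ from Lemma \ref{vanish}(3), all of which have already been established under Assumption \ref{asum0}.
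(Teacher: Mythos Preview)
Your argument is correct. It differs from the paper's proof in the route taken to the number $-K_X\cdot L^2/2+1$. The paper starts from the deformation-theoretic meaning of the virtual dimension, writing $v=\operatorname{ext}^1(\O_S,\O_S)-\operatorname{ext}^2(\O_S,\O_S)=1-\chi(\O_S,\O_S)$ for a general $S\in|L|$, and then computes $\chi(\O_S,\O_S)=\chi(\O_S)-\chi(L|_S)=1-h^0(L|_S)$ by applying $\Hom(-,L|_S)$ to $0\to\O_X\to L\to L|_S\to 0$ and invoking the same vanishings from Lemma~\ref{vanish}; this yields $v=h^0(L|_S)=\dim|L|$ without ever writing Riemann--Roch explicitly. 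You instead take the numerical value $v=-K_X\cdot L^2/2+1$ as given by Thomas's theorem and match it directly against $\chi(L|_S)$ via surface Riemann--Roch and adjunction. Your approach is slightly more elementary and self-contained; the paper's has the virtue of making transparent why $v$ agrees with $\dim|L|$ from the perspective of the obstruction theory that defines it. Both rely on exactly the same inputs from Lemma~\ref{vanish}.
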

\begin{proof}
The virtual dimension $v$ does not depend on $\ch_2$ and $\ch_3$, so by choosing them suitably  we may assume that $\O_S$ is a closed point of $\M(X,\ch)$ for some general member $S\in |L|$. Note that $S$ is in particular irreducible so $\O_S$ is therefore stable. Now we can write 
$$v=\text{ext}^1(\O_S,\O_S)-\text{ext}^2(\O_S,\O_S)=1-\chi(\O_S,\O_S).$$
Applying $\Hom(-,L|_S)$ to the natural short exact sequence $0\to \O_X\to L\to L|_S\to 0$  and taking Euler characteristics, we get 
$$\chi(\O_S,\O_S)=\chi(\O_S)-\chi(L|_S)=h^0(\O_S)-h^0(L|_S)=1-h^0(L|_S).$$ Here the second equality is because of the vanishing $$h^{i\ge 1}(\O_S)=0=h^{i\ge 1}(L|_S)$$ by Assumption \ref{asum0} and Lemma \ref{vanish}. So we showed that $v=h^0(L|_S)=\dim |L|$ by Lemma \ref{vanish}.
\end{proof}

Suppose now that $\ch$ is as in \eqref{ch}, and Assumptions \ref{asum0}, \ref{asum1} are satisfied. We define the Donaldson-Thomas invariant $\DT(X,\ch)$ as follows. Let $$\rho:\M(X,\ch)\to |L|, \qquad \rho(F):=\text{Div}(F),$$ where $\text{Div}(F)$ is the divisor associated to the coherent sheaf $F$ in the sense of  \cite{F, KM}. To see $\rho$ is a morphism of schemes one uses  modular properties of its source and target and the fact that the construction of $\text{Div}(-)$ is well-behaved under basechange, unlike the construction of scheme theoretic support of sheaves (see \cite{G} for details). Define $$\DT(X,\ch)=\rho_*\; [\M(X,\ch)]^{vir} \in A_v(|L|)\cong \Z.$$ %Here we use the identification $A_v(|L|)\cong \Z$.%\int_{[\M(X,\ch)]^{vir}}\rho^*(\text{pt}),$$ where $\text{pt} \in A^v(|L|)$ is the class of a point.
We put these invariants into a generating function. For this, let $S$ be a general member of  $|L|$, and  \begin{equation}\beta \in H^2(S,\Z) \quad \text{such that} \quad  \label{beta}(i_* \beta^{PD})^{PD}=\gamma+L^2/2,\end{equation} where $PD$ denotes the Poincar\'{e} dual, and $i:S\hookrightarrow X$ is the inclusion. Given $\beta$ and $\ch$ as above, define  \begin{equation} \label{n} n=n(\beta, \ch)=\beta^2/2+\gamma \cdot L/2+2L^3/3-\xi.\end{equation}

\begin{lemma} \label{int} If $\M(X,\ch)\neq \emptyset$ then $n$ is a nonnegative integer for any choice of $\beta$ satisfying \eqref{beta}. For a given $n$ and $\ch$, there are finitely many class $\beta$ satisfying \eqref{beta} and \eqref{n}.
\end{lemma}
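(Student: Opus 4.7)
My plan is to exploit the fact that any $F\in\M(X,\ch)$ is generically supported on a smooth member $S\in|L|$, where it takes the form $F=i_*(I_Z\otimes L')$ for some $0$-dimensional subscheme $Z\subset S$ of length $k\geq 0$ and a line bundle $L'\in\pic(S)$; stability rules out higher multiplicity in the support, and rank $1$ torsion-freeness on an integral smooth surface forces this description. I would first verify by Grothendieck-Riemann-Roch applied to the closed immersion $i\colon S\hookrightarrow X$ with normal bundle $N_{S/X}=L|_S=:\ell$ that
\[
\ch(i_*(I_Z\otimes L'))=i_*\!\left(e^{c_1(L')}(1-k[\mathrm{pt}])\cdot\mathrm{td}(N_{S/X})^{-1}\right),
\]
extract the graded pieces, and read off $\gamma=i_*c_1(L')-L^2/2$ and $\xi=c_1(L')^2/2-c_1(L')\cdot\ell/2+\ell^2/6-k$. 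The condition \eqref{beta} then forces $\beta=c_1(L')$ modulo $K:=\ker(i_*\colon H^2(S,\Z)\to H^4(X,\Z))$.

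Next I would substitute these expressions directly into the definition \eqref{n} of $n$ and simplify. Using the projection formula $m\cdot \ell=\int_X i_*m\cdot L=0$ for any $m\in K$ (which crucially makes $\beta\cdot\ell$ constant on the coset $c_1(L')+K$) together with $\ell^2=L^3$, the terms rearrange to express $n$ as an integer combination of $k$, $\beta\cdot\ell$, and $\beta^2$; the key algebraic identity is that the $\ell^2$-contributions from $\gamma\cdot L/2$, $2L^3/3$, and $-\xi$ combine into a form compatible with $\beta^2/2$ modulo $\Z$. This identifies $n$ with a nonnegative quantity involving $k$, giving both integrality and $n\geq 0$.

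For the finiteness statement, observe that $\beta$ ranges over the coset $\beta_0+K$, and along this coset the formula \eqref{n} depends on $\beta$ only through $\beta^2$, since $\beta\cdot\ell$ is constant. I would apply the Hodge index theorem on $S$: any $m\in K$ pairs trivially with $i^*H^2(X)$, in particular with the ample class $\O(1)|_S$, whose self-intersection $L\cdot\O(1)^2>0$ is positive; therefore the intersection pairing restricted to $K$ is negative definite. Fixing $n$ (equivalently fixing $\beta^2$) thus restricts $\beta$ to a bounded set in a negative definite lattice, hence a finite set.

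I expect the main obstacle to be Step 2: carrying out the Chern character calculation carefully enough to see the cancellations that make $n$ integral (the $\ell^2/4$ and $\ell^2/6$ terms must combine with $\beta^2/2$ to land in $\Z$), and in particular justifying the reduction to the generic smooth member of $|L|$ by either a flat deformation of $F$ preserving the Chern character or a limiting argument handling sheaves whose Cartier divisor $\mathrm{Div}(F)$ is singular or reducible — neither assumption on $S$ being smooth is available for every closed point of $\M(X,\ch)$, so the integrality of $n$ as an invariant of $\ch$ alone requires checking that the generic computation extends.
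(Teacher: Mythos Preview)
Your proposal is correct and follows essentially the same route as the paper. Both arguments identify $n$ with the colength of the ideal sheaf via GRR applied to $i\colon S\hookrightarrow X$ (the paper writes the Todd factor as $(1-L/2+L^2/6)|_S$, exactly your $\operatorname{td}(N_{S/X})^{-1}$), and both deduce finiteness from the Hodge index theorem. The only cosmetic difference is in the finiteness step: the paper diagonalizes $H^2(S,\mathbb R)$ with $e_1$ proportional to $\O(1)|_S$ and observes that \eqref{beta} pins down $\alpha_1$ while \eqref{n} then bounds $\sum_{i\ge 2}\alpha_i^2$, whereas you work directly with the coset $\beta_0+K$ and use that $K\perp i^*H^2(X)$ forces the form on $K$ to be negative definite. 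These are the same argument in different clothing.

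One remark: the gap you flag in your final paragraph---that $\M(X,\ch)\neq\emptyset$ does not obviously produce a sheaf supported on a \emph{smooth} member of $|L|$---is real, and the paper's proof simply asserts ``Let $F\in\M(X,\ch)$ be a closed point corresponding to a coherent sheaf supported on the general member $S\in|L|$'' without further justification. So your concern is well-placed; it is not a defect of your proposal relative to the paper, but a point both leave open. Relatedly, the GRR computation pins down $c_1(N)$ as \emph{one} particular $\beta$ in the coset satisfying \eqref{beta}, and $n(\beta,\ch)$ genuinely varies with $\beta$ along that coset (since $\beta^2$ does); the paper's proof, like yours, really only establishes nonnegativity for those $\beta$ that arise as $c_1(N)$ for some actual $F$ over $S$, which is what is needed downstream.
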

\begin{proof}
Let $F \in \M(X,\ch)$ be a closed point corresponding to a coherent sheaf supported on the general member $S\in |L|$ as above, such that $\ch(F)=\ch$. Then, $S$ is nonsingular and $F$ is rank 1 on its support, so  $F\cong i_*(N\otimes I)$, where $i\colon S\hookrightarrow X$ is the inclusion, $N$ is a line bundle, and $I$ is the ideal sheaf of colength $n$.  By GRR, $$\ch(i_*(N\otimes I))=i_*\left(\ch(N\otimes I)\cdot (1-L/2+L^2/6)|_S\right),$$ so $c_1(N)=\beta$ and $c_2(I)=n$  are respectively given by \eqref{beta} and \eqref{n}. This proves the first part.

Suppose $n$, $L$ and $\gamma$ are fixed. Since $H^{i\ge 1}(\O_S)=0$ we know that $\text{Pic}(S)\cong H^2(S,\Z)$ is a finitely generated abelian group. Clearly there are finitely many contributions from its torsion part. For the contributions of the free part,  by Hodge index theorem we can find a basis $e_1,\dots, e_s$ for $H^2(S,\mathbb R)$ such that $e_1^2=1$ and $e_i^2=-1$ for $i\ge 2$ and $e_i\cdot e_j=0$ for $i\neq j$ and $e_1$ is a (real) multiple of the class of $\O(1)|_S$.  Any $\beta \in H^2(X,\mathbb R)$ can be written as $\beta=\sum_{i=1}^s \alpha_i e_i$ where $(\alpha_1,\dots, \alpha_s)$ varies in a lattice in $\mathbb R^s$. Since $\gamma$ and $L$ are fixed, \eqref{beta} implies that $\alpha_1$ must be fixed. \eqref{n} then implies that $\sum_{i=2}^s\alpha_i^2$ is fixed, and hence there are finitely many choices for $\alpha_i$s. 
\end{proof}

\begin{defn} \label{gen}
Fix $L$ and $\gamma$ satisfying Assumption \ref{asum1}. Then define the generating series of DT invariants by summing over all possible $\xi$ (equivalently, over all nonnegative integers $n$ by Lemma \ref{int}): $$\DT_{L,\gamma}(X,q)=\sum_{\beta \text{ as in \eqref{beta}}}q^{\beta^2/2+\delta(L)/24} \; \sum_{n=0}^\infty \DT(X,\ch) \;q^{n},$$ where $\delta(L)=e(S)-K_S\cdot L+L^3$, and so only depends on the linear system $|L|$. By Lemma \ref{int} this generating series is well-defined (i.e. each power of $q$ corresponds to a sum of finitely many $\DT(X,\ch)$s).
\end{defn}

The main result of the paper is

\begin{theorem} \label{eta}
Given $(L,  \gamma)$ as in Definition \ref{gen}, $$\DT_{L,\gamma}(X,q)=\sum_{\beta \text{ as in \eqref{beta}}}q^{\beta^2/2} \;\eta(q)^{-\delta(L)},$$ where $\displaystyle \eta(q)=q^{1/24}\prod_{k>0}(1-q^k)$ is the eta function and $q=e^{2\pi i \tau}$. \end{theorem}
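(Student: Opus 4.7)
The plan is to reduce $\DT(X,\ch)$ to an integral of a top Chern class over the Hilbert scheme $\hilb^n(S)$ of a generic smooth $S\in|L|$, and then to invoke the Carlsson-Okounkov formula \cite{CO}. The main steps are: identify $\M(X,\ch)$ geometrically as a relative Hilbert scheme over $|L|$, establish that it is smooth with a rank-$2n$ obstruction bundle, push forward the virtual class via the projection formula, and recognize the resulting integral as an instance of Carlsson-Okounkov.

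Every $F\in\M(X,\ch)$ is supported on a unique irreducible member $S=\rho(F)\in|L|$ and takes the form $F=i_{S*}(N\otimes I_Z)$, with $N\in\pic(S)$ of first Chern class $\beta$ (one of the finitely many admissible classes from Lemma~\ref{int}) and $Z\subset S$ a length-$n$ subscheme. Since $H^1(\O_S)=0$, the bundle $N$ is uniquely determined by $\beta$; thus $\M(X,\ch)\cong\bigsqcup_\beta\M_\beta$, where $\M_\beta\cong\hilb^n(\S/|L|)$ (twisted by a relative Poincar\'e bundle) via the universal divisor $\S\to|L|$. The Koszul resolution $0\to L^{-1}\to\O_X\to\O_S\to 0$ and the derived identity $Li_S^*\,i_{S*}G\cong G\oplus(G\otimes L^{-1}|_S)[1]$ yield the splitting
$$\Ex^i_X(F,F)\cong\Ex^i_S(I_Z,I_Z)\oplus\Ex^{i-1}_S(I_Z,I_Z\otimes L|_S).$$
By Lemma~\ref{vanish}(1), $\Ex^2_S(I_Z,I_Z)\cong H^2(\O_S)=0$; Serre duality on $S$ together with $H^0(K_X|_S)\cong H^2(L|_S)^*=0$ (using $K_S=(K_X+L)|_S$ and Lemma~\ref{vanish}(3)) gives $\Ex^2_S(I_Z,I_Z\otimes L|_S)=0$. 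A Hartogs argument on the smooth $S$ identifies $\Hom_S(I_Z,I_Z\otimes L|_S)\cong H^0(L|_S)=v$, so $\ext^1_X(F,F)=v+2n=\dim_F\M_\beta$; hence $\M_\beta$ is smooth and the obstruction sheaf is a rank-$2n$ vector bundle $\E$ with fibers $\Ex^1_S(I_Z,I_Z\otimes L|_S)$. Consequently $[\M_\beta]^{vir}=c_{2n}(\E)\cap[\M_\beta]$.

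Because $\dim|L|=v$ equals the virtual dimension of $\M_\beta$, the pushforward $\rho_*[\M_\beta]^{vir}\in A_v(|L|)$ is a multiple of $[|L|]$. The projection formula with a general smooth $S\in|L|$ then yields
$$\DT(X,\ch)=\int_{\hilb^n(S)}c_{2n}\bigl(\E|_{\hilb^n(S)}\bigr).$$
Using the universal subscheme $\mathcal{Z}\subset S\times\hilb^n(S)$ and Grothendieck-Riemann-Roch, one expresses $[\E|_{\hilb^n(S)}]\in K^0(\hilb^n(S))$ as the tautological combination attached to the pair $(L|_S,K_X|_S)$ that enters the Carlsson-Okounkov formula \cite{CO}. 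Their formula evaluates
$$\sum_{n\ge 0}q^n\int_{\hilb^n(S)}c_{2n}(\E|_{\hilb^n(S)})=\prod_{k\ge 1}(1-q^k)^{-\delta(L)},$$
with the exponent $\delta(L)=e(S)-K_S\cdot L+L^3$ emerging from the numerical Chern-character data (Noether's formula and adjunction on $S$). Multiplying by the prefactor $q^{\delta(L)/24}$ converts this product into $\eta(q)^{-\delta(L)}$, and summing over admissible $\beta$ produces the stated formula.

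The principal obstacle is the final identification: expressing $[\E|_{\hilb^n(S)}]$ precisely as the Carlsson-Okounkov tautological class and verifying that the resulting exponent is exactly $\delta(L)$. This demands a careful $K$-theoretic computation of $\E|_{\hilb^n(S)}$ via Grothendieck-Riemann-Roch applied to the universal subscheme $\mathcal{Z}\subset S\times\hilb^n(S)$, combined with repeated use of the vanishings from Lemma~\ref{vanish}.
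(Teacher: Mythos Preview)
Your approach is essentially the paper's: identify $\rho^{-1}(V)$ with a relative Hilbert scheme over the smooth locus $V\subset|L|$, show the obstruction sheaf is a rank-$2n$ bundle via the adjunction triangle for the divisor inclusion, write $[\M_V]^{vir}=c_{2n}(\ob)\cap[\M_V]$, restrict to a general fiber by $j^!$, and quote Carlsson--Okounkov. Two inaccuracies to fix: (i) you cannot assert that \emph{every} $F\in\M(X,\ch)$ sits on an irreducible (or smooth) member of $|L|$, so the identification $\M_\beta\cong\hilb^n(\S/|L|)$ only holds over the open locus $V$ of smooth divisors---this is enough, since $\rho_*[\M]^{vir}$ lands in $A_v(|L|)\cong\Z$ and can be computed after restricting to $V$; (ii) the derived pullback $Li_S^*i_{S*}G$ is not in general split, so your displayed direct-sum decomposition of $\Ex_X^i(F,F)$ is only a long exact sequence---the vanishings $\Ex_S^2(I_Z,I_Z)=0$ and $\Ex_S^2(I_Z,I_Z\otimes L|_S)=0$ still yield the isomorphism $\Ex_X^2(F,F)\cong\Ex_S^1(I_Z,I_Z\otimes L|_S)$ you need.

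The obstacle you flag at the end is not a real one. Once you have $\ob|_{S^{[n]}}\cong\Ext^1_p(\I,\I\otimes L)$ from the exact sequence, this is already the Carlsson--Okounkov class: in $K$-theory it equals $Rp_*L-R\Home_p(\I,\I\otimes L)$ (using $\Hom_p(\I,\I\otimes L)\cong p_*L$ and the vanishing of $\Ext^2_p$), and \cite[Corollary~1]{CO} applies directly with the line bundle $L|_S$; no separate GRR computation is required to extract $\delta(L)$.
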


\section{Proof of Theorem \ref{eta}}

Suppose that $S \in |L|$ is a general member, and $n$ is related to $\ch$ by \eqref{beta} and \eqref{n}. Let $S^{[n]}$ be the Hilbert scheme of $n$ points on $S$. Repeating the argument of Lemma \ref{int} and using assumption $H^1(\O_S)=0$, we see that $\rho^{-1}(S)$ is isomorphic to a finite disjoint union of $m$ copies of $S^{[n]}$, one for each $\beta$ as in \eqref{beta} (Lemma \ref{int}).  $S^{[n]}$ is nonsingular of dimension $2n$. We show that $\rho\colon \M(X,\ch)\to |L|$ is smooth of relative dimension $2n$ over an open neighborhood $V\subset |L|$ of $S$ consisting of nonsingular divisors. % (recall that $v$ by Assumption \ref{asum2} $v$ is the dimension of $|L|$ as well as the virtual dimension of $U$).  Let $U:=\rho^{-1}(V)$. By shrinking $V$ if necessary, we may assume that $U$ is the union of $m$ connected components. 

Let $\D \subset X\times V$ be the (restriction of the) universal divisor, and  $$\hilb^n(\D/V), \qquad \pic(\D/V)$$ be respectively relative Hilbert scheme of length $n$ subschemes and the relative Picard scheme of $\D/V$. Since $\D/V$ have nonsingular fibers of dimension 2, $\hilb^n(\D/V)$ is nonsingular of dimension $2n+v$. By \cite[Theorem 4.8]{K}, $\pic(\D/V)$ represents the sheafification of the Picard functor in \'etale topology. Moreover, since by our assumption $h^{0,1}=h^{0,2}=0$ for the fibers of $\D/V$, by \cite[Theorem 5.19]{K}  $\pic(D/V)$ is smooth of relative dimension 0 and locally of finite type. Let $\pic_{\gamma}(\D/V)$ be the union of components, whose $S$-fiber is one of $\pic_\beta(S)$ where $\beta$ is as in \eqref{beta}.  By Lemma \ref{int}, $\pic_{\gamma}(\D/V)$ is of finite type. 

Let $\M_V:=\rho^{-1}(V)$. We claim that $$\M_V\cong \hilb^n(\D/V) \times_V \pic_\gamma(\D/V).$$ 
To see this, let $B$ be a scheme over $V$. If $\F$ is a flat family over $X\times B$ define $$\D_B:=\text{Div}(\F) \xhookrightarrow{i} X \times B.$$  Since $\text{Div}(-)$ is well-behaved with respect to basechange \cite{F, KM}, $\D_B\cong \D \times_V B$. We can then regard $\F$ as $i_*\G$ for some rank 1 torsion free sheaf $\G$ over $D_B/V$.  By construction $\G$ is flat and $\D_B$ is smooth over $B$. Taking double duals $\G\subset \G^{**}$, by \cite[Lemma 6.13]{Ko} $\N:=\G^{**}$ is a line bundle, and hence we get a family of ideal sheaves $\G(-\N)\subset \O_{\D_B}$ flat over $B$ with each $B$-fiber is an ideal of colength $n$. $\G(-\N)$ and $\N$ therefore determine  a $B$-valued point of $$\hilb^{n}(\D/V)\times_V \pic_\gamma(\D/V).$$ 
Conversely, if a pair $(\I,\N)$, of $B$-flat families of ideals of colength $n$ and line bundles of class $\gamma$ on the fibers of $\D_B:=\D\times_V B$ is given then $i_*(\I\boxtimes \N)$ determines a $B$-valued point of $\M_V$. These assignments are evidently inverse of each other, and so the claim is proven.   

Note that the Poincar\'e line bundles exist only after an \'etale basechange. Therefore, by construction above there is  an \'etale basechange 
$$
\xymatrix{\M_{V^\et} \ar[d]^-\rho \ar[r] &\M_V \ar[d]^-\rho\\ V^{\text{\'et}} \ar[r] &V}
$$ such that the universal sheaf exists over $X\times \M_{V^{\text{\'et}}}$. Let's denote it by $\F$ again and let $\D_{V^{\text{\'et}}}:=\text{Div}(\F) \xhookrightarrow{i} X \times V^\et$. Also, denote by $\I$ the universal ideal sheaf over $\D_{V^\et} \times_{V^\et}\hilb^{n}(\D_{V^\et}/V^\et)$.

%Let $\mathcal S:=\text{Div}(\F) \subset X \times U$ and let $i$ denote this inclusion. Since $\text{Div}(-)$ is well-behaved with respect to basechange \cite{F, KM}, $\S$ is identified with the fiber product $U\times_V\D$ where $\D\subset X\times V$ is the universal divisor.  We can then regard $\F$ as $i_*\G$ for some rank 1 torsion free sheaf $\G$ over $\S$. $S$ is nonsingular of codimension 1 in $X\times U$. Both $\G$ and $\S$ are flat over over $U$. Taking double duals $\G\subset \G^{**}$, by \cite[Lemma 6.13]{K} $\N:=\G^{**}$ is a line bundle, and hence we get a family of ideal sheaves $\G(-\N)\subset \O_\S$ flat over $U$ where each $U$-fiber is an ideal of colength $n$. We obtain a classifying morphism $f\colon U\to \hilb^{n}(\D/V)$, the relative Hilbert scheme of $n$ points on $\D$ over $V$. $\N$ also determines a classifying morphism $g\colon U\to \text{Pic}(\D/V)$, the relative Picard scheme of $\D$ over $V$.
%

Our goal is to write $\DT(X,\ch)$ as an integral over $S^{[n]}$. For this, we find a relation between tangent/obstruction theory of $\M_V$ and $S^{[n]}$ using the identification above. Note that $\pic_\gamma(\D/V)$ consists of finitely many nonsingular components each isomorphic to $V$. We can work over each connected component of $\M_V$ at a time, which is thus identified with $\hilb^{n}(\D/V)$. This shows in particular each connected component of $\M_V$ is nonsingular of dimension $2n+v$. In the rest of the proof we restrict our attention on one such fixed component.%so without loss of generality from here on we assume that $U$ is connected. %a geometric point $\F \in \M(X,\ch)$ in the generic fiber of $\rho$ and the corresponding point $I \in S^{[n]}$ in a systematic way.

We denote the tangent and the obstruction sheaves (of the fixed component) of $\M_V$ by $\tan$ and $\ob$, respectively. %Let $\F$ be  the universal (twisted) sheaf on $X\times \M_V$ and let $\pi\colon X\times \M_V\to \M_V$ be the projection. Then
If $\pi \colon X\times \M_{V^{\text{\'et}}}\to \M_{V^{\text{\'et}}}$ is the projection then  
 $$ \Ext^1_\pi(\F,\F), \qquad  \Ext^2_\pi(\F,\F)$$ descend to $\tan$ and $\ob$, respectively.  These are the first and second cohomologies of the perfect complex $R\Home_{\pi}(\F,\F)$. We claim $\ob$ is  locally free. The nonsingular divisor $\D_{V^{\text{\'et}}}\xhookrightarrow{i} X\times V^{\text{\'et}}$ and adjunction gives an exact triangle \cite[Corollary 11.4]{H}
$$R\Home_{\pi'}(\I,\I)\to R\Home_{\pi}(\F,\F)\to R\Home_{\pi'}(\I,\I(\D_{V^\et}))[-1],$$ where $\pi'=i \circ \pi$. Taking cohomology, we get the exact sequence
$$0=\Ext^2_{\pi'}(\I,\I)\to \Ext^2_\pi(\F,\F)\to \Ext^1_{\pi'}(\I,\I(\D_{V^\et}))\to 0$$ in which the first vanishing is by basechange and our assumption $H^2(\O_S)=0$ for all closed $S\in V$ that in turn implies $\ext^2_S(I,I)=0$ for any colength $n$ ideal $I\subset \O_S$. 
Next, we show that  $\Ext^1_{\pi'}(\I,\I(\D_{V^\et}))$ is locally free of rank $2n$ and hence the same will be true for $\Ext^2_\pi(\F,\F)$ by the exact sequence above.
To do this we use basechange again and show that  over any closed point $S\in V$ the cohomologies of $R\Home_{\pi'}(\I,\I(\D_{V^\et}))$, given by  $\ext^{i}_S(I,I\otimes L)$ for colength $n$ ideals $I\subset \O_S$ have constant dimensions. For $i>2$ they are 0 because $S$ is nonsingular of dimension 2. For $i=2$, it is 0 because of $H^2(L|_S)=0$ (Lemma \ref{vanish}). Finally, $\Hom_S(I,I\otimes L)\cong H^0(L)$ has dimension $v+1$. Therefore, by Riemann-Roch $\ext^{1}_S(I,I\otimes L)$ has  dimension $2n$. This proves the claim.

 Now (any connected component of) $\M_V$ is nonsingular of dimension $2n+v$ and the obstruction sheaf $\ob$ is locally free of rank $2n$. By \cite[Proposition 5.6]{BF}, $$[\M_V]^{vir}=c_{2n}(\ob)\cap [\M_V].$$
The exact sequence above together with Lemma \ref{vanish} also show that the restriction of the obstruction bundle $\ob$ to  (any component of) the fiber $\rho^{-1}(S)$ is equivalent to the Carlson-Okounkov K-theory element $$Rp_*L -R\Home_p(\I,\I\otimes L)$$  where $p\colon S\times S^{[n]}\to S^{[n]}$ is the projection.  
 
Now let $j\colon \text{Spec}\; \CC \hookrightarrow |L|$ be the inclusion corresponding to the general member  $S\in |L|$, and form the fibered square
$$
\xymatrix{\rho^{-1}(S)  \ar[d]^-\rho \ar@{^(->}[r] & \M(X,\ch) \ar[d]^-\rho\\  \text{Spec}\; \CC \ar@{^(->}[r]^-j &|L|.}
$$
 By \cite[Theorems 6.2, 6.5]{Fu}, we can write 
\begin{align*} \DT(X,\ch)&=j^*\rho_* [\M(X,\ch)]^{vir}=\rho_*j^! [\M(X,\ch)]^{vir}\\
&=\rho_*j^! [\M_V]^{vir}=\rho_*j^! c_{2n}(\ob)\cap [\M_V]\\ 
&=\sum_{\beta \text{ as in \eqref{beta}}} \int_{S^{[n]}} c_{2n}(Rp_*L -R\Home_p(I,I\otimes L)).
\end{align*} 
These integrals are given by \cite[Corollary 1]{CO}. This completes the proof of Theorem.

\section{Examples}
\begin{example}[hypersurfaces in $\P^4$]
Let $X\subset \P^4$ be a nonsingular hypersurface of degree $d\le 3$ with the choices $$\O(1):=\O_{\P^4}(1)|_X=:L.$$ Then $H^{i\ge 1}(X,\O)=0$, and 
we have $K_X\cong \O(d-5)$, so $-K_X\cdot L\cdot \O(1)=5-d>0$ and $-K_X \cdot L^2= (5-d)d>d=L^3$ are satisfied for $d\le 3$. %The general members of  

Moreover, by the Lefschetz hyperplane theorem $L$ is an irreducible class. Therefore, both Assumption \ref{asum0} and Assumption \ref{asum1} (for any $\gamma$) are satisfied for these three geometries. Note that if $d=4$, the first inequality in Assumption \ref{asum0} is not satisfied but the second one is. When $d\ge 5$ none of the inequalities are satisfied. For $d= 4,5$ the DT invariants of $X$ are still defined but they don't fit into the framework of this paper. When $d\ge 6$ even the DT invariants of $X$ are not defined. Here, by DT invariants we mean $\DT(X,\ch)$ for $\ch=(0,L, \gamma, \xi)$. 

In the rest of this example, we consider the case $d=2$. By Lefschetz hyperplane theorem  $$H^2(X,\mathbb Z)\cong \mathbb Z \cong \pic(X),\quad H^4(X,\mathbb Z)\cong \mathbb Z $$ are respectively generated  by the class of $L$ and the class of line $\ell=L^2/2$. Tensoring by $\otimes L^{\pm 1}$ induces an isomorphism $\M(X,\ch)\cong \M(X,\ch')$, where $\ch_1=L=\ch'_1$ and $\ch_2=\ch'_2\mp 2\ell$. Because of this identification, we have only two different generating series of DT invariants $$\DT_{L,\ell}(X,q), \quad \DT_{L,2\ell}(X,q).$$ 
A general member of $|L|$ is a nonsingular quadratic surface in $\P^3$ and so is isomorphic to $\P^1\times \P^1$. Let $e_1, e_2$ be the generators of $\pic(\P^1\times \P^1)$.  According to  \eqref{beta}  for any $k\in \mathbb Z$, the classes $\beta=k(e_1-e_2)$ correspond to $\gamma=\ell$ and the classes $\beta=e_1+k(e_1-e_2)$ correspond to $\gamma=2\ell$. By Theorem \ref{eta} 

\begin{align*}
 \DT_{L,\ell}(X,q)&=\sum_{k\in \Z} q^{-k^2}\; \eta(q)^{10},\\
\DT_{L,2\ell}(X,q)&=\sum_{k\in \Z}q^{-k^2-k}\; \eta(q)^{10}.
\end{align*}

\end{example}

\begin{example}[Blow ups of $\P^3$] Let $X$ be the blow up of $\P^3$ at finite number of disjoint points and lines. Let $E$ be the exceptional divisor and $L$ be the (pullback of) hyperplane class. Take $\O(1)=kL-E$ for some $k\gg 0$ for the polarization. $X$ is birational to $\P^3$ and hence $H^1(X,\O)=H^2(X,\O)=0$.

$K_X=-4L+E$, so $-K_X\cdot L^2=4>1=L^3$ and $-K_X\cdot L\cot \O(1)=4k-1>0$. Suppose $Q\subset X$ is the prober transform of a general cubic surface. Let $0\to \O(-3L+E)\to \O(E)\to \O_Q(E)\to 0$ be the natural short exact sequence. Then $H^0(\O(E)\cong H^0(\O_Q(E))\cong \CC$, and hence \begin{align*} H^0(L\otimes K_X)& \cong H^0( \O(-3L+E)=0, \\H^1(L\otimes K_X)&\cong H^1( \O(-3L+E)\cong H^1(\O(E))=0.\end{align*} To see that last vanishing, note that $E$ is a disjoint union finite copies of $\P^2$ and $\P^1\times \P^1$, and over each copy $\O_E(E)$ is either $\O_{P^2}(-1)$ or $\O_{\P^1\times \P^1}(-\Delta)$, so $H^1(\O_E(E))=0$. Together with the natural short exact sequence $0\to \O\to \O(E)\to \O_E(E)\to 0$ we conclude that $H^1(\O(E))=0$. We have checked that condition of  Assumption \ref{asum0} are satisfied for this geometry. 

For simplicity, we only verify Assumption \ref{asum0} in the case $X$ is the blow up of $\P^3$ at one point or one line. In the case of blow up of a point, $a_2=L\cdot(kL-E)^2/2=k^2/2$  and $$a_1=\gamma \cdot \O(1)+L\cdot (4L-E)\cdot (kL-E) /2=rk/2+s+2k,$$ where $\gamma=rL^2/2+sE^2$ for some $r, s\in \mathbb Z$. In the case of blow up of a line, $a_2=L\cdot(kL-E)^2/2=k^2/2-k/2$  and $$a_1=\gamma \cdot \O(1)+L\cdot (4L-E)\cdot (kL-E) /2=rk/2+s_1+s_2+2k-1/2,$$ where $\gamma=rL^2/2+s_1e_1+s_2 e_2$ for some $r, s_1,s_2\in \mathbb Z$, and $e_1, e_2$ are the generators of $H^2(E)$.

 Now if $L_1=E$ the condition is for any $m\in \mathbb Z$\begin{align*}&2m\neq \frac{rk+2s+4k}{k^2}\quad \text{(for blow up at a point)},\\
&m+2\neq \frac{(k+1)(rk+2s_1+2s_2+4k-1)}{k^2-k} \quad \text{(for blow up at a line)} .\end{align*} So for a given $\gamma$, for example it suffices to pick $k\gg 0$ such that the right hand side is not an integer.

%It can be seen easily that the following pairs $(X,L)$ Let $ satisfy Assumptions \ref{asum0}, \ref{asum1}, \ref{asum2} for some choice of a polarization: $$(\P^3, \O_{\P^3}(1)),\;\; (\text{Bl}_P \P^3, p^* \O_{\P^3}(1)),\;\; (T_2,\O_{T_2}(1)),\;\; (T_3,\O_{T_3}(1)).$$ 
%Here  $p: \text{Bl}_P \P^3 \to \P^3$ is the blow up of $\P^3$ where $P$ is a union of finite number of distinct points and nonsingular curves in $\P^3$ with no pairwise intersections; $T_{2}, T_{3} \subset \P^4$ are respectively quadric and cubic hypersurfaces. 
%In fact in each case, any member of the linear system $|L|$ is reduced and irreducible and hence for any $\ch$ as in \eqref{ch}, $\M(X,\ch)$ contains no strictly semistable sheaves. %This means that Assumption \ref{asum1} also holds for all these examples.

%\label{p3[m,n]}
%Let $m, n$ be positive integers and $X=\P^3[m,n]$ be the blow up of $\P^3$ at a generic complete intersection curve of type $m,n$, with $E$ the exceptional divisor. Denote by $L$ the pullback of the hyperplane class from $\P^3$. We take the polarization $H=kL-E$ for $k\gg 0$. It can be seen that the data $(X,L)$, $H$, and $$\ch=(0,L,0,\ch_3)$$ satisfy Assumptions \ref{asum0}, \ref{asum1} and \ref{asum2}.
%In this case $\dim |L|=3$, and $S$ is isomorphic to the blow up of $\P^2$ at $mn$ distinct points. So we get $\delta(L)=(3+mn)+3+1=mn+7.$
%
%$$DT^H_{L,0}(X,q)=\eta(q)^{-mn-7}.$$
   
\end{example}

\section*{Aknowledgement}
We thank Richard Thomas for providing valuable comments on the original draft of this paper. The second author was supported partially by the NSF DMS-1607871, NSF DMS-1306313, the Simons 38558, and Laboratory of Mirror
Symmetry NRU HSE, RF Government grant, ag. No 14.641.31.0001. The second author would like to further sincerely thank the Center for Mathematical Sciences and Applications at Harvard University, the center for Quantum Geometry of Moduli Spaces at Aarhus University, and the Laboratory of Mirror Symmetry in Higher School of Economics, Russian federation, for the great help and support.

\vspace{.5cm}

\noindent \textit{amingh@umd.edu\\ Department of  Mathematics, University of Maryland,  College Park, MD 20742-4015.}

\vspace{.5cm}
\noindent \textit{artan@cmsa.fas.harvard.edu\\
Center for Mathematical Sciences and Applications, Harvard University, Department of Mathematics, 20 Garden Street, Room 207, Cambridge, MA 02139.\\ 
Centre for Quantum Geometry of Moduli Spaces, Aarhus University, Department of Mathematics, NY Munkegade 118, Building 1530, 319, 8000, Aarhus C, Denmark.\\
National Research University Higher School of Economics, Russian Federation, Laboratory of Mirror Symmetry, NRU HSE, 6 Usacheva Street,
Moscow, Russia,  119048. }

\end{document}